\DeclareFontFamily{U}{euf}{}
\DeclareFontShape{U}{euf}{m}{n}{%
  <5><6><7><8><9>gen*eufm%
  <10><10.95><12><14.4><17.28><20.74><24.88>eufm10%
  }{}
\DeclareFontShape{U}{euf}{b}{n}{%
  <5><6><7><8><9>gen*eufb%
  <10><10.95><12><14.4><17.28><20.74><24.88>eufb10%
  }{}
\DeclareFontFamily{U}{msb}{}
\DeclareFontShape{U}{msb}{m}{n}{%
  <5><6><7><8><9>gen*msbm%
  <10><10.95><12><14.4><17.28><20.74><24.88>msbm10%
  }{}
\DeclareFontFamily{U}{msa}{}
\DeclareFontShape{U}{msa}{m}{n}{%
  <5><6><7><8><9>gen*msam%
  <10><10.95><12><14.4><17.28><20.74><24.88>msam10%
  }{}
\newtheorem{thm}{Theorem}[section]
\newtheorem{cor}[thm]{Corollary}
\newtheorem{lem}[thm]{Lemma}
\newtheorem{rmk}[thm]{Remark}
\newtheorem{case}{Case}
\theoremstyle{definition}
\theoremstyle{remark}
\numberwithin{equation}{section} \frenchspacing
\begin{document}

\title[Menon-Sury's identity]{On Menon-Sury's identity with several Dirichlet characters}

\author{Man Chen}
\address{Department of Mathematics, South China University of Technology, Guangzhou 510640, China}
\email{13798043026@163.com}

\author{Su Hu}
\address{Department of Mathematics, South China University of Technology, Guangzhou 510640, China} \email{mahusu@scut.edu.cn}

\author{Yan Li*}
\address{Department of Applied Mathematics, China Agricultural
University, Beijing 100083, China} \email{liyan\_00@cau.edu.cn}
\keywords{Menon's identity; Dirichlet character.}

 \thanks{*Corresponding author}
\subjclass[2010]{11A07, 11A25}
 \keywords{Menon's identity, greatest common divisor, Dirichlet character,
  divisor function,  Euler's totient function.
 }

\begin{abstract}
The Menon-Sury's identity is as follows:
\begin{equation*}
\sum_{\substack{1 \leq a, b_1, b_2, \ldots, b_r \leq n\\\mathrm{gcd}(a,n)=1}} \mathrm{gcd}(a-1,b_1, b_2, \ldots, b_r,n)=\varphi(n) \sigma_r(n),
\end{equation*}
where  $\varphi$ is Euler's totient function and $\sigma_r(n)=\sum_{d\mid n}{d^r}$.
Recently, Li, Hu and Kim \cite{L-K} extended  the above identity to a multi-variable case with a Dirichlet character, that is, they proved
 \begin{equation*}
\sum_{\substack{a\in\Bbb Z_n^\ast \\ b_1, \ldots, b_r\in\Bbb Z_n}} \mathrm{gcd}(a-1,b_1, b_2, \ldots, b_r,n)\chi(a)=\varphi(n)\sigma_r{\left(\frac{n}{d}\right)},
\end{equation*}
where $\chi$ is a Dirichlet character modulo $n$ and
$d$ is the conductor of $\chi$.

In this paper, we explicitly compute the sum
\begin{equation*}\sum_{\substack{a_1, \ldots, a_s\in\Bbb Z_n^\ast \\ b_1, ..., b_r\in\Bbb Z_n}}\gcd(a_1-1, \ldots, a_s-1,b_1, \ldots, b_r, n)\chi_{1}(a_1) \cdots \chi_{s}(a_s).\end{equation*}
where $\chi_{i} (1\leq i\leq s)$ are Dirichlet characters mod $n$ with conductor $d_i$. A special but common case of our main result reads like this :
\begin{equation*}\sum_{\substack{a_1, \ldots, a_s\in\Bbb Z_n^\ast \\ b_1, ..., b_r\in\Bbb Z_n}}\gcd(a_1-1, \ldots, a_s-1,b_1, \ldots, b_r, n)\chi_{1}(a_1) \cdots \chi_{s}(a_s)=\varphi(n)\sigma_{s+r-1}\left(\frac{n}{d}\right)\end{equation*}
if $d$ and $n$ have exactly the same prime factors, where $d={\rm lcm}(d_1,\ldots,d_s)$ is the least common multiple of $d_1,\ldots,d_s$.
Our result generalizes the above Menon-Sury's identity and Li-Hu-Kim's identity.\end{abstract}

\maketitle

\section{Introduction}
In 1965, P. K. Menon \cite{Menon} found the following beautiful identity,
 \begin{equation}\label{oldbegin1}
\sum_{\substack{a\in\Bbb Z_n^\ast
}}\gcd(a -1,n)=\varphi(n) \sigma_{0} (n),
\end{equation}
where $n$ is a positive integer, $\Bbb Z_n^\ast$ is the group of units of the ring $\Bbb Z_n=\Bbb Z/n\Bbb Z$, $\gcd(\ ,\ )$ represents the greatest common divisor,  $\varphi$
is the Euler's totient function and $\sigma_{r} (n) =\sum_{d|n } d^{r}$ is the divisor function.

In 2009,  Sury \cite{Sury} generalized this identity in the following way
\begin{equation}\label{Sury}
\sum_{\substack{a\in\Bbb Z_n^\ast \\ b_1, \ldots, b_r\in\Bbb Z_n}} \mathrm{gcd}(a-1,b_1, b_2, \ldots, b_r,n)=\varphi(n) \sigma_r(n).
\end{equation}

The above Menon-Sury's identity has been generalized in several different directions.
First, it can be extended to arithmetic functions. This direction was considered by T\'oth  who generalized Menon's identity to sums representing arithmetical functions of several variables \cite[Theorems 1, 2]{Toth1}. His results involve several extensions of Menon's identity to multi-variable situations (e.g. \cite[Eq. (8)]{Toth1}).  As an application, he also presented a formula for the number of cyclic subgroups
of the direct product of several cyclic groups of arbitrary orders (see \cite[Theorem 3]{Toth1}).

Menon-Sury's identity can also be extended to residually finite Dedekind domains. It is well-known that the integer rings of number fields   and the integral closure of $\mathbb{F}_{q}[t]$ in the field extension $K/\mathbb{F}_{q}(t)$ with $K$ being an algebraic function field are all residually finite Dedekind domains. This direction was first done by Miguel in 2014 by using Burnside's lemma and the theory of commutative rings (see \cite{Mig1, Mig2}). Then  a further extension was made by
Li and Kim~\cite{LK1} who extended Miguel's result to the case with many tuples of group of units.  For the case $\mathbb{Z}$, their results  read
\begin{align}\label{LKD}
\sum_{\substack{a_1, \cdots, a_s\in\Bbb Z_n^\ast \\ b_1, ..., b_r\in\Bbb Z_n}} \mathrm{gcd}(a_1-1, \cdots, a_s-1,b_1, ..., b_r, n) \\
=\varphi(n) \prod_{i=1}^w(\varphi(p_i^{m_i})^{s-1}p_i^{m_ir}-p_i^{m_i(s+r-1)}+\sigma_{s+r-1}(p_i^{m_i}))\nonumber,
\end{align}
where $n=p_1^{m_1}\cdots p_w^{m_w}$ is the prime factorization of $n$.

Menon-Sury's identity can also be extended to subgroups of general linear group $\textrm{GL}_{r}(\mathbb{Z}_{n})$.
This direction was subsequently considered by T$\breve{a}$rn$\breve{a}$uceanu  \cite{tarn} for groups of upper triangular
matrices in  $\textrm{GL}_{r}(\mathbb{Z}_{n})$ who solved an open problem raised in ~\cite{Sury}. Li and Kim~\cite{LK2} further extended Menon-Sury's identity
  to unipotent groups, Heisenberg groups and extended Heisenberg groups by using  
  Burnside's lemma for group actions of matrix multiplication on column vectors over $\mathbb{Z}_{n}$.

In 2017, Zhao and Cao \cite[Theorem 1.2]{Z-C} obtained a Menon-type identity with a Dirichlet character. In fact, they showed that
\begin{equation} \label{Cao}
\sum_{\substack{a\in\Bbb Z_n^\ast}} \mathrm{gcd}(a-1,n)\chi(a)=\varphi(n)\tau{\left(\frac{n}{d}\right)},
\end{equation}
where  $\chi$ is a Dirichlet character mod $n$ with conductor $d$.
Then T\'oth \cite{Toth2}  extended the above identity by considering even functions (mod $n$) from an alternative approach. As an application, he also obtained certain related formulas concerning Ramanujan sums.
Recently, Li, Hu and Kim \cite{L-K} extended (\ref{Cao}) to a multi-variable case, that is, they proved
 \begin{equation}\label{char1}
\sum_{\substack{a\in\Bbb Z_n^\ast \\ b_1, ..., b_r\in\Bbb Z_n}} \mathrm{gcd}(a-1,b_1, b_2, \ldots, b_r,n)\chi(a)=\varphi(n)\sigma_r{\left(\frac{n}{d}\right)}.
\end{equation}
For other extensions of Menon's identity with additive characters and multiplicative characters, see \cite{LK2} and \cite{LHK3}.

In this paper, generalizing Li, Hu and Kim's result ~(\ref{char1}), we consider the Menon-type identity involving several Dirichlet characters, that is, we evaluate the sum
\begin{equation}\begin{aligned}\label{chars}&S_{\chi_{1},\chi_{2},\ldots,\chi_{s}}(n,r)\\&=\sum_{\substack{a_1, \ldots, a_s\in\Bbb Z_n^\ast \\ b_1, ..., b_r\in\Bbb Z_n}}\gcd(a_1-1, \ldots, a_s-1,b_1, ..., b_r, n)\chi_{1}(a_1) \cdots \chi_{s}(a_s),\end{aligned}\end{equation}
where $\chi_{i} (1\leq i\leq s)$ are Dirichlet characters mod $n$ with conductor $d_i$. First, we explicit compute $S_{\chi_{1},\chi_{2},\ldots,\chi_{s}}(n,r)$ in the
assumption that $n$ is a prime power (see Theorem \ref{thhh3}). Then by using the Chinese remainder theorem, we pass to the general case (see Theorem~\ref{mainresult} and Remark~\ref{Remark 3.3}).

\section{Prime power case}
Throughout  this section, we  assume $n=p^m$, where $p$ is a prime number and $m$ is a positive integer.
 Let $\chi_{i}~ (1\leq i\leq s)$ be  Dirichlet characters modulo $n$ with conductor $d_i$.
 Since $d_i\mid n$, we have $d_i=p^{t_i}$, where $0\leq t_i\leq m$.\

As in \cite[p.46]{LK},
we shall introduce flirtations for 
the multiplicative group $\Bbb Z_n^\ast$. 
 As $n=p^m$ is a prime power,
$\Bbb Z_n^\ast$ has a filtration consisting of subgroups:
\begin{equation}\nonumber
1=1+p^m\Bbb Z_n\subset 1+p^{m-1}\Bbb Z_n\subset ...\subset 1+p\Bbb Z_n\subset \Bbb Z_n^\ast.
\end{equation}
For simplicity of the proof, we introduce the following notations.
\begin{equation}\label{unit}
\begin{aligned}
U_0&=\Bbb Z_n^\ast, U_j=1+p^j\Bbb Z_n\ \ {\rm  with} \ \ 1\leq j\leq m\ \  {\rm  and}\ \  U_{m+1}=\varnothing,\\
V_j&=U_j-U_{j+1} \ \ {\rm  with} \ \  0\leq j\leq m.
\end{aligned}
\end{equation}
Clearly, 
$\Bbb Z_n^\ast=\bigcup \limits_{j=0}^m V_j$ with disjoint union.
 Also, we have
\begin{align}\nonumber
\# U_0=p^m-p^{m-1}, \   \# U_{m+1}=0 \   {\rm  and}\
\# U_j=p^{m-j}  \ {\rm  with}\  1\leq j\leq m,\nonumber
\end{align}
where \# denotes the cardinality of sets.\

Since
\begin{equation*}
\begin{split}
&\sum_{\substack{a_1, \ldots, a_s\in\Bbb Z_{p^m}^\ast \\ b_1, ..., b_r\in\Bbb Z_{p^m}}}
 \gcd(a_1-1, \ldots, a_s-1,b_1, ..., b_r, p^m)\chi_{1}(a_1) \cdots \chi_{s}(a_s)\\
=&\sum_{k=0}^m \sum_{\substack{\gcd (b_1, ..., b_r, p^m)=p^k \\ b_1, ..., b_r\in\Bbb Z_{p^m}}} \sum_{a_1, \ldots, a_s\in\Bbb Z_{p^m}^\ast} \gcd (a_1-1, \ldots, a_s-1, p^k) \chi_{1}(a_1) \cdots \chi_{s}(a_s),
\end{split}
\end{equation*}
we get
\begin{equation}\label{LiAdd3}
\begin{split}
&\ S_{\chi_{1},\chi_{2},\ldots,\chi_{s}}(p^m,r)\\
=&\sum_{k=0}^m\left(\sum_{a_1, \ldots, a_s\in\Bbb Z_{p^m}^\ast} \gcd (a_1-1, \ldots, a_s-1, p^k) \chi_{1}(a_1) \cdots \chi_{s}(a_s)\right)\\&\quad\times \left(\sum_{\substack{\gcd (b_1, ..., b_r, p^m)=p^k \\ b_1, ..., b_r\in\Bbb Z_{p^m}}}1\right).
\end{split}
\end{equation}
Therefore, we need to compute
$$\sum_{a_1, \ldots, a_s\in\Bbb Z_{p^m}^\ast} \gcd (a_1-1, \ldots, a_s-1, p^k) \chi_{1}(a_1) \cdots \chi_{s}(a_s)\ \  {\rm  and}\  \sum_{\substack{\gcd (b_1, ..., b_r, p^m)=p^k \\ b_1, ..., b_r\in\Bbb Z_{p^m}}}1.$$
 These will be done in Lemmas \ref{2.2} and \ref{thm3}, respectively. We may first need the following two lemmas (Lemmas \ref{1} and \ref{thm2}).
\begin{lem}[{Li, Hu and Kim, \cite[Lemma 2.1]{L-K}}]\label{1}
Let $n=p^m$ and $\chi$ be a Dirichlet character modulo $n$ with conductor $p^t$, where $0\leq t\leq m$. Then, for $0\leq j\leq m$, we have
\begin{equation}\nonumber
\sum_{a\in U_j} \chi(a)=\left\{
						\begin{aligned}
						&\#U_j, \ \ if\ \  j=t,t+1,..., m.\\
						&0,\ \  otherwise.
						\end{aligned}
				    \right.
\end{equation}
\end{lem}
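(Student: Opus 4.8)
The plan is to exploit the fact that each $U_j$ is a subgroup of $\Bbb Z_n^\ast$ and to recast $\sum_{a\in U_j}\chi(a)$ as a character sum over a finite abelian group, so that only the triviality or non-triviality of $\chi$ restricted to $U_j$ matters. First I would recall the defining property of the conductor: since $U_t=1+p^t\Bbb Z_n$ is exactly $\ker(\Bbb Z_{p^m}^\ast\to\Bbb Z_{p^t}^\ast)$, the reduction homomorphism, saying that $\chi$ has conductor $p^t$ means precisely that $\chi$ is trivial on $U_t$ but not on $U_{t-1}$ (when $t\geq 1$). This is the only input about $\chi$ that the argument requires.

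Next I would split into two ranges of $j$. For $t\leq j\leq m$ we have the inclusion $U_j\subseteq U_t$ (because $p^t\mid p^j$), so $\chi(a)=1$ for every $a\in U_j$ and the sum collapses to $\sum_{a\in U_j}1=\#U_j$. For $0\leq j<t$ the reverse inclusion $U_{t-1}\subseteq U_j$ holds, and since $\chi$ is non-trivial on $U_{t-1}$ it is \emph{a fortiori} non-trivial on the larger group $U_j$; hence $\chi|_{U_j}$ is a non-trivial character of the finite abelian group $U_j$, and the orthogonality relation gives $\sum_{a\in U_j}\chi(a)=0$.

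The only point requiring care --- and the nearest thing to an obstacle --- is verifying that the $U_j$ really are subgroups, so that orthogonality is legitimate, together with the book-keeping that $\chi$ non-trivial on a subgroup forces it non-trivial on any containing subgroup. Both are immediate once one identifies $U_j$ with $\ker(\Bbb Z_{p^m}^\ast\to\Bbb Z_{p^j}^\ast)$, whence membership is governed simply by the divisibility $p^i\mid p^j$ for $i\leq j$. I would close by noting that the boundary case $t=0$ (so $\chi$ is the principal character) falls into the first range with $j=t=0$, giving $\sum_{a\in U_0}\chi(a)=\#U_0$ as claimed, so no separate treatment is needed.
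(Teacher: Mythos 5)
Your proof is correct: identifying $U_j$ with $\ker(\Bbb Z_{p^m}^\ast\to\Bbb Z_{p^j}^\ast)$, translating the conductor condition into ``$\chi$ trivial on $U_t$ but not on $U_{t-1}$,'' and then applying orthogonality of characters on the finite abelian group $U_j$ is exactly the standard argument, and you handle the boundary cases $t=0$ and $j<t$ cleanly. The paper itself states this lemma as a quotation from Li, Hu and Kim \cite[Lemma 2.1]{L-K} and supplies no proof, so there is nothing in-paper to diverge from; your argument is the natural one underlying the cited source.
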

\begin{lem}\label{thm2}
Let $n=p^m$ and $\chi_{i}~(1\leq i \leq s)$ be  Dirichlet characters modulo $n$ with conductors $d_i=p^{t_i}$, where $0\leq t_i\leq m$. Let $u=\max\{t_1,..., t_s\}$.Then, for $0\leq j\leq m$, we have
\begin{equation}\nonumber
\sum_{(a_1, \ldots, a_s)\in (U_j)^s} \chi_{1}(a_1) \cdots \chi_{s}(a_s)=\left\{
						\begin{aligned}
						&(\#U_j)^s, \ \ \textrm{if}\ \  j=u,u+1,..., m.\\
						&0,\ \  \textrm{otherwise}.
						\end{aligned}
				    \right.
\end{equation}
\end{lem}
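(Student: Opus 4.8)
The plan is to reduce the multivariable character sum to a product of single-variable sums and then invoke Lemma \ref{1}, the quoted result of Li, Hu and Kim. The crucial structural observation is that the index set is a Cartesian product $(U_j)^s$ and the summand $\chi_1(a_1)\cdots\chi_s(a_s)$ is a product of functions, each depending on only one coordinate. Hence by the distributive law for finite sums the sum separates:
\begin{equation}\nonumber
\sum_{(a_1, \ldots, a_s)\in (U_j)^s} \chi_1(a_1)\cdots\chi_s(a_s)=\prod_{i=1}^s\left(\sum_{a_i\in U_j}\chi_i(a_i)\right).
\end{equation}

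Next I would apply Lemma \ref{1} to each factor individually. Since $\chi_i$ has conductor $p^{t_i}$, that lemma gives $\sum_{a_i\in U_j}\chi_i(a_i)=\#U_j$ when $j\geq t_i$ and $\sum_{a_i\in U_j}\chi_i(a_i)=0$ when $j<t_i$. Substituting these values into the product from the first step reduces the problem to a purely combinatorial case analysis on $j$.

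Finally I would carry out that case analysis. The product equals $(\#U_j)^s$ exactly when every factor is nonzero, that is, when $j\geq t_i$ holds simultaneously for all $i$; this is equivalent to $j\geq\max\{t_1,\ldots,t_s\}=u$. Conversely, if $j<u$ then the index $i$ achieving $t_i=u$ contributes a vanishing factor $\sum_{a_i\in U_j}\chi_i(a_i)=0$, so the whole product is $0$. This is precisely the dichotomy asserted in the statement.

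I do not anticipate a genuine obstacle here: the result is a direct consequence of Lemma \ref{1} together with the multiplicativity of the summand over a product index set. The only point requiring a little care is the boundary bookkeeping in the last step, namely translating the condition ``all $s$ factors are nonzero'' into the clean inequality $j\geq u$ and confirming that a single vanishing factor (guaranteed whenever $j<u$) suffices to kill the product.
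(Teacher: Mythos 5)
Your proposal is correct and matches the paper's own proof essentially verbatim: the paper likewise factors the sum over $(U_j)^s$ into the product $\prod_{i=1}^s\sum_{a_i\in U_j}\chi_i(a_i)$, applies Lemma \ref{1} to each factor (recorded via Iverson brackets $[j\geq t_i]$), and observes that the product of the bracket conditions is $[j\geq\max(t_1,\ldots,t_s)]$. No differences worth noting.
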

\begin{proof}
By Lemma \ref{1}, we have
\begin{align*}\nonumber
&\sum_{(a_1, \ldots, a_s)\in (U_j)^s} \chi_{1}(a_1) \cdots \chi_{s}(a_s)\\
&=\sum_{a_1 \in U_j} \chi_{1}(a_1)\cdots \sum_{a_s \in U_j} \chi_{s}(a_s)\\
&=\# U_j[ j\geq t_1 ]\cdots\# U_j [j\geq t_s]\\
&=(\# U_j)^s[j\geq \max(t_1,\ldots,t_s)],
\end{align*}
where [\ \ ] is the Iverson bracket, i.e.
\begin{equation*}[P]={\left\{
						\begin{aligned}
						&1, \ \ \textrm{if condition}\ P\ \textrm{holds;}\\
						&0,\ \  \textrm{otherwise}.
						\end{aligned}
				    \right.}
\end{equation*}
\end{proof}
\begin{lem}\label{2.2}
Let $n=p^m$ and $\chi_{i}~(1\leq i\leq s)$ be  Dirichlet characters modulo $n$ with conductors $p^{t_i}$, where $0\leq t_i\leq m$.
Let $k$ be an integer such that $0\leq k\leq m$. Let $u=\max\{t_1,t_2,\cdots, t_s\}$. Then we have
\begin{equation*}
\begin{split}
&\sum_{a_1, \ldots, a_s\in\Bbb Z_n^\ast} \gcd(a_1-1, \ldots, a_s-1, p^k) \chi_{1}(a_1) \cdots \chi_{s}(a_s)\\
=&\left\{
						\begin{aligned}
						&0, \  &\textrm{if} \ k<u; \\
                        &\varphi(p^m)p^{(m-k)(s-1)}\sigma_{s-1}(p^{k-u}),& \textrm{if}\ k\geq u>0;\\
						&\varphi(p^m)^s-\varphi(p^m)p^{m(s-1)}+\varphi(p^m)p^{(m-k)(s-1)}\sigma_{s-1}(p^{k}), &\textrm{if}\ u=0.
						\end{aligned}
				    \right.
\end{split}
\end{equation*}
\end{lem}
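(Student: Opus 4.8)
The plan is to expand the greatest common divisor along the filtration $U_0\supset U_1\supset\cdots\supset U_m\supset U_{m+1}$ and then to feed the result into Lemma \ref{thm2}. The starting point is the elementary telescoping identity $\sum_{l=0}^{j}\varphi(p^l)=p^j$, valid for every $j\ge 0$. Writing $\gcd(a_1-1,\ldots,a_s-1,p^k)=p^{j}$, where $p^{j}$ is the largest common $p$-power divisor of $a_1-1,\ldots,a_s-1$ and $p^k$, and recalling that $a_i\in U_l$ holds exactly when $p^l\mid a_i-1$, the product $\prod_{i}[a_i\in U_l]$ equals $1$ precisely when $l\le j$. Hence, summing $\varphi(p^l)$ over the range $0\le l\le j$ reproduces the gcd, and I obtain the key expansion
\begin{equation*}
\gcd(a_1-1,\ldots,a_s-1,p^k)=\sum_{l=0}^{k}\varphi(p^l)\prod_{i=1}^{s}[a_i\in U_l],
\end{equation*}
where $[\ \ ]$ is the Iverson bracket introduced above and the cutoff $l\le k$ comes from intersecting with $p^k$.

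Next I would substitute this expansion into the sum and interchange the order of summation, pulling the index $l$ to the outside. The inner sum over $a_1,\ldots,a_s$ then factors completely,
\begin{equation*}
\sum_{a_1,\ldots,a_s\in\Bbb Z_n^\ast}\Big(\prod_{i=1}^{s}[a_i\in U_l]\,\chi_i(a_i)\Big)=\prod_{i=1}^{s}\sum_{a_i\in U_l}\chi_i(a_i),
\end{equation*}
which by Lemma \ref{thm2} equals $(\#U_l)^s$ when $l\ge u$ and $0$ otherwise. Therefore the whole expression collapses to the single-index sum
\begin{equation*}
\sum_{a_1,\ldots,a_s\in\Bbb Z_n^\ast}\gcd(a_1-1,\ldots,a_s-1,p^k)\,\chi_1(a_1)\cdots\chi_s(a_s)=\sum_{l=u}^{k}\varphi(p^l)(\#U_l)^s,
\end{equation*}
with the convention that this sum is empty, hence $0$, whenever $k<u$; this already settles the first case.

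It then remains to evaluate $\sum_{l=u}^{k}\varphi(p^l)(\#U_l)^s$ for $k\ge u$, and here I would split off the index $l=0$, because $\#U_0=\varphi(p^m)$ does not fit the uniform formula $\#U_l=p^{m-l}$ that holds for $l\ge 1$. If $u>0$ every summand has $l\ge 1$, so inserting $\#U_l=p^{m-l}$ and $\varphi(p^l)=p^l-p^{l-1}$ turns the sum into the geometric series $(p-1)\sum_{l=u}^{k}p^{\,ms-1-l(s-1)}$; reindexing by $i=m-l$ rewrites this as $\varphi(p^m)\,p^{(m-k)(s-1)}\sigma_{s-1}(p^{k-u})$, which is the second case. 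If $u=0$ the isolated term $l=0$ contributes $\varphi(p^0)(\#U_0)^s=\varphi(p^m)^s$, while the remaining terms $l=1,\ldots,k$ form exactly the same geometric series with lower limit $1$; the same reindexing shows that this tail equals $\varphi(p^m)\big(p^{(m-k)(s-1)}\sigma_{s-1}(p^k)-p^{m(s-1)}\big)$, and adding back the isolated $\varphi(p^m)^s$ gives the third case.

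The individual computations are routine finite geometric summations, so the genuinely delicate point is the special role of the index $l=0$, namely that $\#U_0=\varphi(p^m)$ rather than $p^{m}$. This single discrepancy is what produces the correction terms $\varphi(p^m)^s-\varphi(p^m)p^{m(s-1)}$ in the $u=0$ case, and it must be tracked carefully when separating the $l=0$ summand; once the gcd has been expanded as above and the sum factored via Lemma \ref{thm2}, everything else is bookkeeping of exponents.
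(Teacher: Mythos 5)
Your proposal is correct and takes essentially the same approach as the paper: both arguments reduce the character sum to the identical intermediate expression $\sum_{l=0}^{k}\varphi(p^l)\sum_{(a_1,\ldots,a_s)\in(U_l)^s}\chi_1(a_1)\cdots\chi_s(a_s)$ --- you via Gauss's identity $p^{\min(j,k)}=\sum_{l\leq\min(j,k)}\varphi(p^l)$ and an interchange of summation, the paper via the shell partition $(U_j)^s\setminus(U_{j+1})^s$ followed by telescoping, which are two forms of the same Abel summation --- and then both finish identically with Lemma \ref{thm2} and the same geometric-series bookkeeping, including the special handling of $\#U_0=\varphi(p^m)$ in the $u=0$ case. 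One cosmetic nit: your claim that $\prod_{i}[a_i\in U_l]=1$ precisely when $l\leq j$ (with $p^j$ the gcd including the factor $p^k$) is literally false for $l>k$, but this is harmless since your sum is truncated at $l=k$.
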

\begin{proof}
By \eqref{unit} and direct computation, we have
\begin{align*}
&\sum_{a_1, \ldots, a_s\in\Bbb Z_n^\ast} \gcd(a_1-1, \ldots, a_s-1, p^k) \chi_{1}(a_1) \cdots \chi_{s}(a_s)\\
=&\sum_{j=0}^m \sum_{(a_1, \ldots, a_s)\in(U_ j)^s-(U_{j+1})^s}\gcd(a_1-1, \ldots, a_s-1, p^k)\chi_{1}(a_1) \cdots \chi_{s}(a_s)\\
=&\sum_{j=0}^{k-1} \sum_{(a_1, \ldots, a_s)\in(U_ j)^s-(U_{j+1})^s} p^j \chi_{1}(a_1) \cdots \chi_{s}(a_s)+\sum_{j=k}^m \sum_{(a_1, \ldots, a_s)\in(U_ j)^s-(U_{j+1})^s}p^k \chi_{1}(a_1) \cdots \chi_{s}(a_s)\\
=&\sum_{j=0}^{k-1}p^j\left(\ \sum_{(a_1, \ldots, a_s)\in(U_ j)^s}\chi_{1}(a_1) \cdots \chi_{s}(a_s)- \sum_{(a_1, \ldots, a_s)\in(U_{j+1})^s}\chi_{1}(a_1) \cdots \chi_{s}(a_s)\right)\\
&+p^k\sum_{(a_1, \ldots, a_s)\in(U_k)^s}\chi_{1}(a_1) \cdots\chi_{s}(a_s)\\
=&\sum_{j=0}^{k} p^j \sum_{(a_1, \ldots, a_s)\in(U_j)^s} \chi_{1}(a_1) \cdots \chi_{s}(a_s)-\sum_{j=1}^k p^{j-1} \sum_{(a_1, \ldots, a_s)\in(U_ j)^s} \chi_{1}(a_1) \cdots \chi_{s}(a_s).
\end{align*}
Collecting the similar items, we get
\begin{equation}\label{Rainy1}
\begin{split}
&\sum_{a_1, \ldots, a_s\in\Bbb Z_n^\ast} \gcd(a_1-1, \ldots, a_s-1, p^k) \chi_{1}(a_1) \cdots \chi_{s}(a_s)\\
=&\sum_{(a_1, \ldots, a_s)\in(U_0)^s}\chi_{1}(a_1) \cdots \chi_{s}(a_s)+\sum_{j=1}^{k}(p^j-p^{j-1})\sum_{(a_1, \ldots, a_s)\in(U_j)^s}\chi_{1}(a_1) \cdots \chi_{s}(a_s).
\end{split}
\end{equation}
Now we need to calculate the above sum case by case.
\begin{case} $k<\max\{t_1,t_2,\cdots, t_s\}$.\\ 

\rm{Substituting Lemma \ref{thm2} into \eqref{Rainy1}, we have}
\begin{align*}
&\sum_{a_1, \ldots, a_s\in\Bbb Z_n^\ast} \gcd(a_1-1, \ldots, a_s-1, p^k) \chi_{1}(a_1) \cdots \chi_{s}(a_s)\\
&=0+\sum_{j=1}^k(p^j-p^{j-1})\cdot 0=0.
\end{align*}
\end{case}

\begin{case}$k\geq\max\{t_1,t_2,\cdots, t_s\}>0$.\\

\rm{Substituting Lemma \ref{thm2} into \eqref{Rainy1} and from $u=\max\{t_1,\cdots, t_s\}$, we get}
\begin{align*}
&\sum_{a_1, \ldots, a_s\in\Bbb Z_{p^m}^\ast} \gcd(a_1-1, \ldots, a_s-1, p^k) \chi_{1}(a_1) \cdots \chi_{s}(a_s)\\
&=\sum_{j=u}^{k}(p^j-p^{j-1})(\#U_j)^s\\
&=(p^m-p^{m-1})\sum_{j=u}^{k}p^{(m-j)(s-1)}\\
&=\varphi(p^m)p^{(m-k)(s-1)}\sigma_{s-1}(p^{k-u}).
\end{align*}
\end{case}
\begin{case}$\max\{t_1,\cdots, t_s\}=0$, i.e. $\chi_{i}~(1\leq i\leq s)$ are trivial characters.\

\rm{The same argument as in \textbf{Case 2} shows that}
\begin{align*}
&\sum_{a_1, \ldots, a_s\in\Bbb Z_{p^m}^\ast} \gcd(a_1-1, \ldots, a_s-1, p^k) \chi_{1}(a_1) \cdots \chi_{s}(a_s)\\
&=(\# U_0)^s+\sum_{j=1}^k(p^j-p^{j-1})(\# U_j)^s\\
&=(p^m-p^{m-1})^s-(1-p^{-1})p^{ms}+\sum_{j=0}^k(p^j-p^{j-1})p^{(m-j)s}\\
&=\varphi(p^m)^s-\varphi(p^m)p^{m(s-1)}+\varphi(p^m)p^{(m-k)(s-1)}\sigma_{s-1}(p^{k}).
\end{align*}
\end{case}
\end{proof}
\begin{lem}[{Li, Hu and Kim \cite[Lemma 2.4]{L-K}}]\label{thm3}
Let $n=p^m$ be a prime power and $k\geq 0$ be an integer.
Assume $r\geq 0$ is an integer.
 Then
\begin{equation}\nonumber
\sum_{\substack{b_1, ..., b_r\in \Bbb Z_{p^m} \\ \gcd(b_1, ..., b_r,p^m)=p^k}} 1=\left\{
						\begin{aligned}
						p^{(m-k)r}-&p^{(m-k-1)r}, \ &if\   k< m,\\
						&1,\  &if\  k=m.
						\end{aligned}
				    \right.
\end{equation}
\end{lem}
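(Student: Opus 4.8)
The plan is to translate the gcd condition into divisibility conditions on the entries and then count by inclusion--exclusion. For a tuple $(b_1,\ldots,b_r)\in\Bbb Z_{p^m}^r$ the quantity $\gcd(b_1,\ldots,b_r,p^m)$ is always a power $p^j$ with $0\le j\le m$, and for $k<m$ it equals $p^k$ precisely when $p^k$ divides every $b_i$ while $p^{k+1}$ fails to divide all of the $b_i$ simultaneously; the case $k=m$ instead forces every $b_i\equiv 0 \pmod{p^m}$.

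First I would count the tuples all of whose entries are divisible by a fixed power $p^\ell$ with $0\le\ell\le m$. The residues in $\Bbb Z_{p^m}$ divisible by $p^\ell$ are exactly $0,p^\ell,2p^\ell,\ldots$, of which there are $p^{m-\ell}$; hence there are $(p^{m-\ell})^r=p^{(m-\ell)r}$ such tuples. This is the only ingredient the whole argument needs.

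Next, for $k<m$ I would write the set of tuples with $\gcd$ exactly $p^k$ as the set of tuples with every entry divisible by $p^k$, minus those with every entry divisible by $p^{k+1}$. This subtraction is legitimate because the second set is contained in the first, and their difference is precisely the tuples realizing the minimal valuation $k$. Applying the count of the previous step with $\ell=k$ and $\ell=k+1$ then gives $p^{(m-k)r}-p^{(m-k-1)r}$.

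Finally, the boundary case $k=m$ requires separate treatment, since the only residue in $\Bbb Z_{p^m}$ divisible by $p^m$ is $0$; thus the sole admissible tuple is $(0,\ldots,0)$ and the value is $1$. I do not expect a genuine obstacle here, as the argument is entirely elementary; the one point demanding care is exactly why the statement splits into two cases, namely that naively substituting $k=m$ into $p^{(m-k)r}-p^{(m-k-1)r}$ would yield the meaningless $1-p^{-r}$, so the terminal value must be computed directly rather than read off the generic formula.
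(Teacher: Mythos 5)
Your proof is correct: the count $p^{(m-\ell)r}$ of tuples with all entries divisible by $p^{\ell}$, the subtraction for $k<m$, and the direct treatment of $k=m$ (including your observation that the generic formula degenerates there) together give exactly the stated values, and the argument even survives the edge case $r=0$. The paper itself quotes this lemma from Li, Hu and Kim without reproducing a proof, and your elementary inclusion--exclusion is precisely the standard argument used in that reference, so there is nothing to add.
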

Now we  are at the position to evaluate the sum $S_{\chi_{1},\chi_{2},\cdots,\chi_{s}}(p^m,r)$.
 \begin{thm}\label{thhh3}
 Assume $r\geq 0$ and $s>0$ are integers. Let $n=p^m$ be a prime power and $\chi_{i}~(1\leq i\leq s)$ be   Dirichlet characters modulo $n$ with conductors  $d_i=p^{t_i}$.
  Let $u=\max\{t_1,\cdots, t_s\}$. Then we have the following identity
\begin{equation}\begin{split} &S_{\chi_{1},\chi_{2},\ldots,\chi_{s}}(n,r)\\=&
\left\{
						\begin{aligned}
						&\varphi(p^m)\sigma_{s+r-1}\left(p^{m-u}\right), \ &if\   u>0,\\
						&\varphi(p^m)(\varphi(p^m)^{s-1}p^{mr}+\sigma_{s+r-1}(p^{m})-p^{m(s+r-1)}),\  &if\  u=0.
						\end{aligned}
				    \right.
\end{split}
\end{equation}

 \end{thm}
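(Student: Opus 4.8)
The plan is to substitute the two factors computed in Lemmas \ref{2.2} and \ref{thm3} into the decomposition \eqref{LiAdd3} and then evaluate the resulting single sum over $k$ in closed form. Writing $A_k$ for the unit-sum of Lemma \ref{2.2} and $B_k$ for the counting factor of Lemma \ref{thm3}, \eqref{LiAdd3} reads $S_{\chi_1,\ldots,\chi_s}(p^m,r)=\sum_{k=0}^m A_kB_k$, and I would handle the two cases $u>0$ and $u=0$ separately, exactly as in the statement.

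For $u>0$, Lemma \ref{2.2} makes $A_k$ vanish for $k<u$, so
\[
S_{\chi_1,\ldots,\chi_s}(p^m,r)=\varphi(p^m)\sum_{k=u}^m p^{(m-k)(s-1)}\sigma_{s-1}(p^{k-u})\,B_k .
\]
Here the term $k=m$ must be isolated, since Lemma \ref{thm3} gives $B_m=1$ rather than the telescoping value $p^{(m-k)r}-p^{(m-k-1)r}$. Setting $M=m-u$ and $q=p^{s-1}$, I would insert the closed form $p^{(m-k)(s-1)}\sigma_{s-1}(p^{k-u})=(q^{M+1}-q^{M-k+u})/(q-1)$, split the sum into its two geometric pieces, and evaluate them using the telescoped count $\sum_{k=u}^m B_k=p^{Mr}$ (the number of $b$-tuples with all entries divisible by $p^u$) together with the identity $\sum_{k=u}^{m-1}q^{\,m-k}(p^{(m-k)r}-p^{(m-k-1)r})=(1-p^{-r})(\sigma_{s+r-1}(p^M)-1)$. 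The whole expression then collapses to $\varphi(p^m)\sigma_{s+r-1}(p^{m-u})$ once one checks the single algebraic relation $q\,p^{M(s+r-1)}=q\,\sigma_{s+r-1}(p^M)-p^{-r}(\sigma_{s+r-1}(p^M)-1)$, which itself telescopes after a shift of the summation index.

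For $u=0$, Case 3 of Lemma \ref{2.2} applies for every $k$, so $A_k$ equals the previous expression $\varphi(p^m)p^{(m-k)(s-1)}\sigma_{s-1}(p^{k})$ plus the two $k$-independent terms $\varphi(p^m)^s-\varphi(p^m)p^{m(s-1)}$. I would pull these constants out of the sum and account for them via $\sum_{k=0}^m B_k=p^{mr}$, and then recognize that the remaining $k$-dependent sum is precisely the $M=m$ instance of the computation above, hence equals $\varphi(p^m)\sigma_{s+r-1}(p^m)$. Collecting terms yields $\varphi(p^m)(\varphi(p^m)^{s-1}p^{mr}+\sigma_{s+r-1}(p^m)-p^{m(s+r-1)})$, as claimed.

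The main obstacle is the closed-form evaluation of the mixed sum $\sum_k p^{(m-k)(s-1)}\sigma_{s-1}(p^{k-u})B_k$: one must manage the inner geometric factor $\sigma_{s-1}(p^{k-u})$ against the telescoping increments $B_k$, keep careful track of the anomalous top term $B_m=1$, and—because the geometric closed form for $\sigma_{s-1}$ degenerates at $s=1$—either verify the final identity separately in that case or phrase the algebra as a polynomial identity in $q=p^{s-1}$ that remains valid at $q=1$. Everything outside this computation is routine bookkeeping.
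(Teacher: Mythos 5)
Your proposal is correct and takes essentially the same approach as the paper's proof: substitute Lemmas \ref{2.2} and \ref{thm3} into the decomposition \eqref{LiAdd3}, evaluate the resulting sum over $k$ using the geometric closed form of $\sigma_{s-1}(p^{k-u})$ while isolating the anomalous top term $B_m=1$, and reduce the $u=0$ case to the $u>0$ computation plus constant terms via $\sum_k B_k=p^{mr}$. One merit of your write-up is that you explicitly flag the degeneracy at $s=1$, where the closed form for $\sigma_{s-1}$ divides by $q-1=0$; the paper's own step \eqref{LiAdd4} silently divides by $1-p^{1-s}$, which vanishes at $s=1$, and your suggestion to read the algebra as a polynomial identity in $q=p^{s-1}$ (valid at $q=1$) covers that case cleanly.
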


\begin{proof}
Firstly, assume $u>0$.

Substituting Lemma \ref{2.2} into \eqref{LiAdd3}, we get
\begin{equation}\label{LiAdd4}
\begin{split}
&S_{\chi_{1},\chi_{2},\ldots,\chi_{s}}(n,r)\\
=&\sum_{k=u}^m\varphi(p^m)p^{(m-k)(s-1)}\sigma_{s-1}(p^{k-u}) \left(\sum_{\substack{\gcd (b_1, ..., b_r, p^m)=p^k \\ b_1, ..., b_r\in\Bbb Z_{p^m}}}1\right)\\
=&\frac{\varphi(p^m)p^{(m-u)(s-1)}}{1-p^{1-s}}\sum_{k=u}^m\left(1-p^{(1-s)(k-u+1)}\right)
\left(\sum_{\substack{\gcd (b_1, ..., b_r, p^m)=p^k \\ b_1, ..., b_r\in\Bbb Z_{p^m}}}1\right).\end{split}
\end{equation}
The last equality is due to
$$\sigma_{s-1}(p^{k-u})=p^{(k-u)(s-1)}\frac{1-p^{(1-s)(k-u+1)}}{1-p^{1-s}}.
$$

From Lemma \ref{thm3},
\begin{equation}\label{LiAdd5}
\begin{split}
&\sum_{k=u}^m\left(1-p^{(1-s)(k-u+1)}\right)
\left(\sum_{\substack{\gcd (b_1, ..., b_r, p^m)=p^k \\ b_1, ..., b_r\in\Bbb Z_{p^m}}}1\right)\\
=&\sum_{k=u}^{m-1}\left(1-p^{(1-s)(k-u+1)}\right)
\left(p^{(m-k)r}-p^{(m-k-1)r}\right)+\left(1-p^{(1-s)(m-u+1)}\right)\\
=& \sum_{k=u}^m\left(1-p^{(1-s)(k-u+1)}\right)p^{(m-k)r}-\sum_{k=u+1}^{m}\left(1-p^{(1-s)(k-u)}\right)p^{(m-k)r}\\
=& \left(1-p^{1-s}\right)p^{(m-u)r}+\sum_{k=u+1}^{m}\left(p^{(1-s)(k-u)}-p^{(1-s)(k-u+1)}\right)p^{(m-k)r}\\
=& \left(1-p^{1-s}\right)\sum_{k=u}^{m}p^{(1-s)(k-u)}p^{(m-k)r}\\
=& \left(1-p^{1-s}\right)\sum_{k=0}^{m-u}p^{(1-s)k}p^{(m-k-u)r}\\
=& \left(1-p^{1-s}\right)p^{(m-u)r}\sum_{k=0}^{m-u}p^{(1-s-r)k}.
\end{split}
\end{equation}
Combining \eqref{LiAdd4} and \eqref{LiAdd5} together, we get
\begin{equation}\label{LiAdd6}
\begin{split}
&S_{\chi_{1},\chi_{2},\ldots,\chi_{s}}(n,r)\\
=&\varphi(p^m)p^{(m-u)(s+r-1)}\sum_{k=0}^{m-u}p^{(1-s-r)k}=\varphi(p^m)\sigma_{s+r-1}\left(p^{m-u}\right).
\end{split}
\end{equation}

Secondly, we treat the case $u=0$, i.e. all $\chi_{i}$-s are trivial characters.

Substituting Lemma \ref{2.2} into \eqref{LiAdd3}, we get
\begin{equation}\label{LiAdd7}
\begin{split}
&S_{\chi_{1},\chi_{2},\ldots,\chi_{s}}(n,r)\\
=&\sum_{k=0}^m\left(						\varphi(p^m)^s-\varphi(p^m)p^{m(s-1)}+\varphi(p^m)p^{(m-k)(s-1)}\sigma_{s-1}(p^{k})  \right)\\ &\times \left(\sum_{\substack{\gcd (b_1, ..., b_r, p^m)=p^k \\ b_1, ..., b_r\in\Bbb Z_{p^m}}}1\right)\\
=&\left(						\varphi(p^m)^s-\varphi(p^m)p^{m(s-1)}\right)\sum_{k=0}^m\left(\sum_{\substack{\gcd (b_1, ..., b_r, p^m)=p^k \\ b_1, ..., b_r\in\Bbb Z_{p^m}}}1\right)\\
&+\sum_{k=0}^m\varphi(p^m)p^{(m-k)(s-1)}\sigma_{s-1}(p^{k}) \left(\sum_{\substack{\gcd (b_1, ..., b_r, p^m)=p^k \\ b_1, ..., b_r\in\Bbb Z_{p^m}}}1\right).
\end{split}
\end{equation}
The same argument as in the case $u>0$ (i.e., letting $u=0$ in the second line of equation \eqref{LiAdd4}) shows that
\begin{equation}\label{LiAdd8}
  \begin{split}
  &\sum_{k=0}^m\varphi(p^m)p^{(m-k)(s-1)}\sigma_{s-1}(p^{k}) \left(\sum_{\substack{\gcd (b_1, ..., b_r, p^m)=p^k \\ b_1, ..., b_r\in\Bbb Z_{p^m}}}1\right)\\=&\varphi(p^m)\sigma_{s+r-1}\left(p^{m}\right).
  \end{split}
\end{equation}
Substituting \eqref{LiAdd8} and the equation
\begin{equation}
 \sum_{k=0}^m\left(\sum_{\substack{\gcd (b_1, ..., b_r, p^m)=p^k \\ b_1, ..., b_r\in\Bbb Z_{p^m}}}1\right)=p^{mr}
\end{equation}
into \eqref{LiAdd7}, we get the desired result.
\end{proof}

\section{The general case}
In this section, we shall consider the general case, that is, we evaluate $S_{\chi_{1},\chi_{2},\cdots,\chi_{s}}(n,r)$
if $\chi_{i}~(1\leq i\leq s)$ are Dirichlet characters modulo any positive integer $n$.
 First, from the Chinese remainder theorem, we show  $S_{\chi_{1},\chi_{2},\cdots,\chi_{s}}(n,r)$  is multiplicative with respect to $n$.
 Then using multiplicative property, we pass to the general case.

Let $n=n_1 n_2$ be the product of positive integers $n_1$ and $n_2$ such that
$\gcd(n_1,n_2)=1$. By the Chinese remainder theorem, we have the ring isomorphism:
$\Bbb Z_n \simeq \Bbb Z_{n_1}\oplus \Bbb Z_{n_2}$, which induces the multiplicative group isomorphism:
$\Bbb Z_n^\ast \simeq \Bbb Z_{n_1}^\ast\times \Bbb Z_{n_2}^\ast$.
Therefore, each Dirichlet character modulo $n$ can be uniquely written as $\chi=\chi^{(1)}\cdot\chi^{(2)}$,
 where $\chi$, $\chi^{(1)}$ and $\chi^{(2)}$ are Dirichlet characters modulo
  $n$, $n_1$ and $n_2$, respectively.
  Explicitly,
\begin{equation}\nonumber
\chi(c\  {\rm  mod}\  n)= \chi^{(1)}(c\ {\rm  mod}\  n_1)\cdot \chi^{(2)}(c\ {\rm  mod}\  n_2)
\end{equation}
for any integer $c$ such that $\gcd (c,n)=1.$\

To simplify notations, for $a_1, \ldots, a_s \in \Bbb Z_n$, we let $a'_1, \cdots a'_s\in \Bbb Z_{n_1}$ and $a''_1, \cdots, a''_s\in \Bbb Z_{n_2}$ denote the image of
 $a_1, \ldots, a_s $ in $\Bbb Z_{n_1}$ and $\Bbb Z_{n_2}$, respectively, i.e. $a_i'\equiv a_i\mod n_1$ and $a''_i\equiv a_i\mod n_2,$ for $ i=1, 2, \cdots, s$.
  Let $d, d^{(1)}$ and $d^{(2)}$ be the conductors of $\chi, \chi^{(1)}$ and $\chi^{(2)}$, respectively.
  It is well known that $d=d^{(1)} d^{(2)}$.

The following lemma shows that $S_{\chi_{1},\chi_{2},\ldots,\chi_{s}}(n,r)$ is multiplicative.
\begin{lem}\label{lem3}
Notations as above, we have
$$ S_{\chi_{1},\chi_{2},\ldots,\chi_{s}}(n,r)= S_{\chi_{1}^{(1)},\chi_{1}^{(1)},\ldots,\chi_{s}^{(1)}}(n_{1},r)\cdot  S_{\chi_{1}^{(2)},\chi_{2}^{(2)},\cdots,\chi_{s}^{(2)}}(n_{2},r).$$
\end{lem}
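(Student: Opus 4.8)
The plan is to prove multiplicativity by factoring the defining sum through the Chinese remainder theorem. The key observation is that every ingredient of the sum $S_{\chi_1,\ldots,\chi_s}(n,r)$ splits compatibly across the decomposition $\Bbb Z_n \simeq \Bbb Z_{n_1}\oplus\Bbb Z_{n_2}$: the index set $(\Bbb Z_n^\ast)^s\times(\Bbb Z_n)^r$ is in bijection with the product of the corresponding index sets over $n_1$ and $n_2$; the character values factor as $\chi_i(a_i)=\chi_i^{(1)}(a_i')\,\chi_i^{(2)}(a_i'')$ by the given formula; and the greatest common divisor factors multiplicatively. First I would fix the bijection explicitly: each $a_i\in\Bbb Z_n^\ast$ corresponds to the pair $(a_i',a_i'')\in\Bbb Z_{n_1}^\ast\times\Bbb Z_{n_2}^\ast$, and similarly each $b_j\in\Bbb Z_n$ corresponds to $(b_j',b_j'')\in\Bbb Z_{n_1}\times\Bbb Z_{n_2}$, so that summing over all tuples modulo $n$ is the same as summing independently over all tuples modulo $n_1$ and modulo $n_2$.

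Next I would address the gcd. The crucial identity is that, since $\gcd(n_1,n_2)=1$, for any integers $x_1,\ldots,x_t$ one has
\begin{equation*}
\gcd(x_1,\ldots,x_t,n)=\gcd(x_1,\ldots,x_t,n_1)\cdot\gcd(x_1,\ldots,x_t,n_2).
\end{equation*}
Applying this with the tuple $(a_1-1,\ldots,a_s-1,b_1,\ldots,b_r)$, and noting that $\gcd(a_i-1,\ldots,n_1)$ depends only on the reductions $a_i'\equiv a_i\bmod n_1$ (and likewise mod $n_2$), I obtain
\begin{equation*}
\gcd(a_1-1,\ldots,a_s-1,b_1,\ldots,b_r,n)=\gcd(a_1'-1,\ldots,b_r',n_1)\cdot\gcd(a_1''-1,\ldots,b_r'',n_2).
\end{equation*}
This is the step where coprimality of $n_1$ and $n_2$ is genuinely used, and it is the technical heart of the argument.

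With both the gcd and the characters factored, the summand becomes a product of a term depending only on the primed (mod $n_1$) variables and a term depending only on the double-primed (mod $n_2$) variables. Since the summation ranges also factor through the bijection, I would interchange the order of summation to split the double sum into a product of two independent sums, each of which is precisely $S_{\chi_1^{(1)},\ldots,\chi_s^{(1)}}(n_1,r)$ and $S_{\chi_1^{(2)},\ldots,\chi_s^{(2)}}(n_2,r)$ respectively. This yields the claimed identity.

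The main obstacle I anticipate is verifying the multiplicative factorization of the gcd cleanly, in particular making sure the reduction of $a_i-1$ modulo $n_1$ really equals $a_i'-1$ as an element used in the gcd with $n_1$, and that no ambiguity arises from representatives. This is routine but must be stated carefully, since the gcd is an integer invariant while the $a_i$ are residue classes; the point is that $\gcd(a_i-1,n_1)$ is well defined independently of the chosen integer representative of $a_i$. Once this is pinned down, the remaining steps — the bijection of index sets and the splitting of the sum into a product — are purely formal rearrangements.
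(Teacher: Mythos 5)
Your proposal is correct and follows essentially the same route as the paper: both arguments rest on the CRT bijection $(\Bbb Z_n^\ast)^s\times(\Bbb Z_n)^r \simeq (\Bbb Z_{n_1}^\ast)^s\times(\Bbb Z_{n_1})^r\times(\Bbb Z_{n_2}^\ast)^s\times(\Bbb Z_{n_2})^r$, the factorization $\gcd(x_1,\ldots,x_t,n)=\gcd(x_1,\ldots,x_t,n_1)\gcd(x_1,\ldots,x_t,n_2)$ for $\gcd(n_1,n_2)=1$, and the character decomposition $\chi_i=\chi_i^{(1)}\chi_i^{(2)}$, after which the sum splits as a product. Your extra remark about well-definedness of the gcd on residue classes is a careful touch the paper leaves implicit, but it does not change the argument.
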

\begin{proof}  First, we check that
\begin{align*}
&\sum_{\substack{a_1, \ldots, a_s\in\Bbb Z_n^\ast \\ b_1, ..., b_r\in\Bbb Z_n}}
 \gcd(a_1-1, \ldots, a_s-1,b_1, ..., b_r, n)\chi_{1}(a_1) \cdots \chi_{s}(a_s)\\
=&\sum_{\substack{a_1, \ldots, a_s\in\Bbb Z_n^\ast \\ b_1, ..., b_r\in\Bbb Z_n}} \gcd(a_1-1, \ldots, a_s-1,b_1, ..., b_r, n_1) \gcd(a_1-1, \ldots, a_s-1,b_1, ..., b_r, n_2)\\
&\quad\times\chi_{1}^{(1)}(a_1) \cdots \chi_{s}^{(1)}(a_s)\chi_{1}^{(2)}(a_1) \cdots \chi_{s}^{(2)}(a_s) \\
=&\sum_{\substack{a'_1, \cdots, a'_s\in\Bbb Z_{n_1}^\ast \\ b'_1, ..., b'_r\in\Bbb Z_{n_1}}}\gcd(a'_1-1, \cdots, a'_s-1, b'_1, ..., b'_r, n_1)\chi_{1}^{(1)}(a'_1) \cdots \chi_{s}^{(1)}(a'_s)\\&\quad\times\sum_{\substack{a''_1, \cdots, a''_s\in\Bbb Z_{n_2}^\ast \\ b''_1, ..., b''_r\in\Bbb Z_{n_2}}}\gcd(a''_1-1, \cdots, a''_s-1, b''_1, ..., b''_r, n_2)\chi_{1}^{(2)}(a''_1) \cdots \chi_{s}^{(2)}(a''_s).
\end{align*}
The last equality follows from the Chinese remainder theorem.
Indeed, as $(a_1, \ldots, a_s, b_1,...,b_r)$ runs over ($\Bbb Z_n^\ast)^s\times(\Bbb Z_{n})^r, (a'_1, \cdots, a'_s, b'_1,..., b'_r, a''_1, \cdots, a''_s, b''_1,...,b''_r)$ runs over ($\Bbb Z_{n_1}^\ast)^s\times(\Bbb Z_{n_1})^r\times(\Bbb Z_{n_2}^\ast)^s\times(\Bbb Z_{n_2})^r$, too.
Therefore, we have
$$ S_{\chi_{1},\chi_{2},\ldots,\chi_{s}}(n,r)= S_{\chi_{1}^{(1)},\chi_{1}^{(1)},\ldots,\chi_{s}^{(1)}}(n_{1},r)\cdot  S_{\chi_{1}^{(2)},\chi_{2}^{(2)},\ldots,\chi_{s}^{(2)}}(n_{2},r).$$\end{proof}
\begin{thm}\label{mainresult}
 Assume $r\geq 0$, $s>0$ and $n>0$ are integers. Let $\chi_{i}~(1\leq i\leq s)$ be   Dirichlet characters modulo $n$ with conductors  $d_i$.
   Let $d={\rm lcm}(d_1,\ldots,d_s)$ be the least common multiple of $d_1,\ldots,d_s$. Let $n_0|n$ be the greatest integer such that $n_0$ and $d$ have the same prime factors.
   Then we have the following identity
\begin{equation}\label{general}\begin{split}&\sum_{\substack{a_1, \ldots, a_s\in\Bbb Z_n^\ast \\ b_1, ..., b_r\in\Bbb Z_n}}\gcd(a_1-1, \ldots, a_s-1,b_1, ..., b_r, n)\chi_{1}(a_1) \cdots \chi_{s}(a_s)\\
=&\varphi(n)\sigma_{s+r-1}\left(\frac{n_0}{d}\right)
\prod_{p^m||n/n_0}\left(\varphi(p^m)^{s-1}p^{mr}+\sigma_{s+r-1}(p^{m})-p^{m(s+r-1)}\right),\end{split}\end{equation}
where $p^m||n/n_0$ means $p^m$ exactly divides $n/n_0$.
 \end{thm}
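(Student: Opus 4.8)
The plan is to bootstrap from the prime power evaluation in Theorem~\ref{thhh3} to arbitrary $n$ using the multiplicativity recorded in Lemma~\ref{lem3}. Writing the prime factorization $n=p_1^{m_1}\cdots p_w^{m_w}$ and applying Lemma~\ref{lem3} by induction on $w$ (with $\gcd(p_i^{m_i},\prod_{j\ne i}p_j^{m_j})=1$ at each step), I would obtain
$$S_{\chi_1,\ldots,\chi_s}(n,r)=\prod_{i=1}^w S_{\chi_1^{(i)},\ldots,\chi_s^{(i)}}(p_i^{m_i},r),$$
where $\chi_j^{(i)}$ is the $p_i$-component of $\chi_j$ under the factorization of Dirichlet characters described just before Lemma~\ref{lem3}.

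The heart of the argument is a bookkeeping step translating the local data back into the global quantities $d$ and $n_0$. Since the conductor $d_j$ of $\chi_j$ factors as $d_j=\prod_i p_i^{t_j^{(i)}}$, where $p_i^{t_j^{(i)}}$ is the conductor of the local character $\chi_j^{(i)}$, the $p_i$-adic valuation of $d={\rm lcm}(d_1,\ldots,d_s)$ equals $u_i:=\max_j t_j^{(i)}$, which is exactly the exponent $u$ appearing in Theorem~\ref{thhh3} for the factor $p_i^{m_i}$. In particular $p_i\mid d$ precisely when $u_i>0$. As each $d_i\mid n$ we have $d\mid n$, so by the defining property of $n_0$ one gets $n_0=\prod_{u_i>0}p_i^{m_i}$ and $n/n_0=\prod_{u_i=0}p_i^{m_i}$; moreover $d=\prod_{u_i>0}p_i^{u_i}$, whence $n_0/d=\prod_{u_i>0}p_i^{m_i-u_i}$.

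With this dictionary I would split the product over the primes according to whether $u_i>0$ or $u_i=0$ and substitute the two branches of Theorem~\ref{thhh3}: the factor for a prime with $u_i>0$ is $\varphi(p_i^{m_i})\sigma_{s+r-1}(p_i^{m_i-u_i})$, and for a prime with $u_i=0$ it is $\varphi(p_i^{m_i})\bigl(\varphi(p_i^{m_i})^{s-1}p_i^{m_i r}+\sigma_{s+r-1}(p_i^{m_i})-p_i^{m_i(s+r-1)}\bigr)$. Factoring out every $\varphi(p_i^{m_i})$ and invoking the multiplicativity of $\varphi$ produces the leading $\varphi(n)$; collecting the terms $\sigma_{s+r-1}(p_i^{m_i-u_i})$ over $u_i>0$ and using the multiplicativity of $\sigma_{s+r-1}$ yields $\sigma_{s+r-1}(n_0/d)$; and the surviving factors over $u_i=0$ are precisely the product $\prod_{p^m\,||\,n/n_0}(\cdots)$ displayed in \eqref{general}. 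This reassembles into the asserted identity.

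Beyond this reindexing the proof carries no analytic difficulty: all the real content already lives in Theorem~\ref{thhh3} and Lemma~\ref{lem3}. The single delicate point, and the only place an error could slip in, is the identification of the valuation of the least common multiple with the local maximum $u_i$, together with the correct translation of $n_0$, $d$, and $n/n_0$ into a partition of the prime factors of $n$; once that is nailed down, multiplicativity of $\varphi$ and $\sigma_{s+r-1}$ closes the argument mechanically.
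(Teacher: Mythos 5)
Your proposal is correct and follows essentially the same route as the paper: factor $S_{\chi_1,\ldots,\chi_s}(n,r)$ over the prime powers dividing $n$ via Lemma~\ref{lem3}, apply the two branches of Theorem~\ref{thhh3} locally according to whether the local conductor exponent $u_i$ is positive or zero, and reassemble via multiplicativity of $\varphi$ and $\sigma_{s+r-1}$. Your dictionary $v_{p_i}(d)=u_i$, $n_0=\prod_{u_i>0}p_i^{m_i}$, $n_0/d=\prod_{u_i>0}p_i^{m_i-u_i}$ is exactly the paper's identification $d=\prod_{p\mid n_0}d^{(p)}$ with $d^{(p)}=1\Leftrightarrow p\nmid n_0$, so nothing is missing.
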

\begin{proof}
Let $n=\prod p^{m_p}$ be the prime factorization of $n$.
Then, for each $i~(1\leq i\leq s)$, we have a decomposition 
$\chi_{i}=\prod_{p|n}\chi_{i}^{(p)}$,
where $\chi_{i}^{(p)}$ is a Dirichlet character modulo $p^{m_p}$ with conductor $d_{i}^{(p)}$.  
 It is easy to see that $d_{i}=\prod_{p|n}d_{i}^{(p)}$, 
 for $1\leq i\leq s$. Let
 \begin{equation}\label{F1}d^{(p)}={\rm lcm}\left(d_{1}^{(p)},d_{2}^{(p)},\ldots,d_{s}^{(p)}\right)\end{equation}

 By \eqref{F1}, definition of $n_0$ and $d$, $d=\prod_{p|n_0}d^{(p)}$ and $\gcd(n_0,n/n_0)=1$. So for $p|n$, we have
 \begin{equation}
 d^{(p)}=1\Leftrightarrow p\nmid n_0.
 \end{equation}
 Theorefore, applying Theorem \ref{thhh3} to  $\chi_{1}^{(p)},\chi_{2}^{(p)},\ldots,\chi_{s}^{(p)}$, we have
 \begin{equation}\label{F2}
S_{\chi_{1}^{(p)},\chi_{2}^{(p)},\ldots,\chi_{s}^{(p)}}(p^{m_{p}},r)
=\varphi(p^{m_{p}})\sigma_{s+r-1}\left(\frac{p^{m_{p}}}{d^{(p)}}\right)
 \end{equation}
if $p|n_0$; and otherwise for $p|(n/n_0)$, we have
 \begin{equation}\label{F3}\begin{split}&S_{\chi_{1}^{(p)},\chi_{2}^{(p)},\ldots,\chi_{s}^{(p)}}(p^{m_{p}},r)\\
 =&\varphi(p^{m_{p}})(\varphi(p^{m_p})^{s-1}p^{m_{p}r}+\sigma_{s+r-1}(p^{m_p})-p^{m_{p}(s+r-1)}).
 \end{split}
 \end{equation}
Applying Lemma \ref{lem3},  \eqref{F2} and \eqref{F3}, we have
\begin{equation}~\label{F4} \begin{aligned} &\qquad S_{\chi_{1},\chi_{2},\ldots,\chi_{s}}(n,r)\\
&=\prod_{p|n}S_{\chi_{1}^{(p)},\chi_{2}^{(p)},\ldots,\chi_{s}^{(p)}}(p^{m_{p}},r)
\\&=\prod_{p|n_0}\varphi(p^{m_{p}})\sigma_{s+r-1}\left(\frac{p^{m_{p}}}{d^{(p)}}\right)\\
&\times\prod_{p|n/n_0}\varphi(p^{m_{p}})\left(\varphi(p^{m_{p}})^{s-1}p^{m_{p}r}+\sigma_{s+r-1}(p^{m_{p}})-p^{m_{p}(s+r-1)}\right).
\end{aligned}\end{equation}
Applying the multiplicative property of $\varphi$ and $\sigma_{s+r-1}$, and the equation $d=\prod_{p|n_0}d^{(p)}$ into \eqref{F4}, we obtain the identity in the general case.
\end{proof}
\begin{rmk}\label{Remark 3.3} If $s=1$ and $\chi_{1 }$ is a trivial character modulo $n$,  (\ref{general}) reduces to Sury's identity (\ref{Sury}). If  $d_1=d_2=\ldots d_s=1$ in (\ref{general}), that is, $\chi_{1},\chi_{2},\ldots,\chi_{s}$ are trivial characters, then  we reproduce Li and Kim's identity ({\ref{LKD}}). Letting $s=1$ and $r=1$ in (\ref{general}), we recover Zhao and Cao's identity (\ref{Cao}). Letting $s=1$ in~(\ref{general}), we recover Li,  Hu and  Kim's identity (\ref{char1}).   
\end{rmk}

Finally, we mention that, in many cases, Theorem~\ref{mainresult} implies the following result.
\begin{cor}
 Assume $r\geq 0$, $s>0$ and $n>0$ are integers. Let $\chi_{i}~(1\leq i\leq s)$ be   Dirichlet characters modulo $n$ with conductors  $d_i$.
   Let $d={\rm lcm}(d_1,\ldots,d_s)$ be the least common multiple of $d_1,\ldots,d_s$. If $n$ and $d$ have exactly the same prime factors,  
   then we have the following identity
\begin{equation*}\begin{split}\sum_{\substack{a_1, \ldots, a_s\in\Bbb Z_n^\ast \\ b_1, ..., b_r\in\Bbb Z_n}}\gcd(a_1-1, \ldots, a_s-1,b_1, ..., b_r, n)\chi_{1}(a_1) \cdots \chi_{s}(a_s)
=\varphi(n)\sigma_{s+r-1}\left(\frac{n}{d}\right).
\end{split}\end{equation*}
 \end{cor}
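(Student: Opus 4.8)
The plan is to derive this directly from Theorem~\ref{mainresult}, which already evaluates the sum for an arbitrary modulus $n$. The only thing to understand is what the hypothesis ``$n$ and $d$ have exactly the same prime factors'' does to the auxiliary integer $n_0$ appearing in that theorem; once that is clarified, the corollary is a one-line specialization.

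First I would recall that $n_0$ is defined as the greatest divisor of $n$ whose prime factors coincide with those of $d$; concretely, $n_0=\prod_{p\mid d}p^{v_p(n)}$, the part of $n$ supported on the primes dividing $d$, where $p^{v_p(n)}$ denotes the exact power of $p$ in $n$. Since each $d_i\mid n$ we have $d\mid n$, and because $v_p(d)\le v_p(n)$ for every prime $p\mid d$, it follows that $d\mid n_0$, so that the term $\sigma_{s+r-1}(n_0/d)$ in \eqref{general} is well defined. Next I would observe that under the hypothesis every prime dividing $n$ also divides $d$, hence the product defining $n_0$ ranges over all primes dividing $n$ and gives $n_0=\prod_{p\mid n}p^{v_p(n)}=n$.

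Consequently $n/n_0=1$, so the product over the prime powers exactly dividing $n/n_0$ in \eqref{general} is empty and equals $1$, while $n_0/d=n/d$. Substituting $n_0=n$ into the right-hand side of \eqref{general} therefore collapses it to $\varphi(n)\sigma_{s+r-1}(n/d)$, which is precisely the asserted identity. There is essentially no genuine obstacle here: the full content already resides in Theorem~\ref{mainresult}, and the proof reduces to recognizing that the prime-factor hypothesis forces $n_0=n$. The only point requiring a moment's care is this clean identification of $n_0$ with $n$, together with the empty-product convention, rather than any substantive computation.
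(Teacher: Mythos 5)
Your proof is correct and is exactly the argument the paper intends: the corollary is stated as a direct specialization of Theorem~\ref{mainresult}, and the whole content is the observation that the hypothesis forces $n_0=n$, so that $n/n_0=1$ kills the product and $\sigma_{s+r-1}(n_0/d)=\sigma_{s+r-1}(n/d)$. Your added checks (that $d\mid n_0$ so the divisor function is well defined, and the empty-product convention) are sound and merely make explicit what the paper leaves implicit.
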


\end{document}